\newcommand{\RULE}[2]{\cfrac{#1}{\raisebox{-1mm}{\ensuremath{#2}}}}
\def\BibTeX{{\rm B\kern-.05em{\sc i\kern-.025em b}\kern-.08em
    T\kern-.1667em\lower.7ex\hbox{E}\kern-.125emX}}
\tikzset{join/.code=\tikzset{after node path={%
\ifx\tikzchainprevious\pgfutil@empty\else(\tikzchainprevious)%
edge[every join]#1(\tikzchaincurrent)\fi}}}
\tikzset{>=stealth',every on chain/.append style={join},
         every join/.style={->}}
\tikzstyle{labeled}=[execute at begin node=$\scriptstyle,
\newtheorem{Def}{Definition}
\newtheorem{Thm}{Theorem}
\newtheorem{Rmk}[Thm]{Remark}
\newtheorem{Exm}[Thm]{Example}
\newtheorem{Lem}[Thm]{Lemma}
\theoremstyle{plain}
\theoremstyle{definition}
\title{A parametrised axiomatization  for a large number of restricted second-order logics}
 \author{Guillermo Badia }
\address{University of Queensland, Brisbane, Australia}
\email{g.badia@uq.edu.au}
 \author{John L. Bell}
 \address{University of Western Ontario, London, Canada}
 \email{jbell@uwo.ca}
\begin{document}


\begin{abstract}
By limiting the range of the predicate variables in a second-order language one may obtain restricted versions of second-order logic such as weak second-order logic or definable subset logic. In this note we provide an infinitary strongly complete axiomatization for several systems of this kind having the range of the predicate variables as a parameter. The completeness argument uses  simple techniques from the theory of Boolean algebras.

\medskip

\noindent{\bf Keywords:}  restricted second-order logics, axiomatization, completeness\medskip

\noindent{\bf 2020 Mathematics Subject Classification:}  Primary 03B16, Secondary 03G05
 \end{abstract}

\dedicatory{This article is dedicated to our friend John N. Crossley on the occasion of his 86th birthday.}
\maketitle

\section{Introduction}

Second-order logic famously extends first-order (or ``elementary") logic by allowing for the possibility of  quantification not just over elements of a given universe of discourse but over properties or relations in said universe \cite{Bell2, va}. There is, however, a choice to be made as to what subsets or relations of the universe we should be allowed to quantify over. Restricted second-order logics prevent quantification over arbitrary subsets and may circumscribe attention, for example in the case of so called weak second-order logic, to finite subsets or to subsets definable without parameters in definable subset logic.   Once one makes such choice, though, a natural question to ask is whether the resulting logic can be shown to be complete for some formal system.

In this sense one might ask for two kinds of completeness, \emph{weak} (every validity is provable) or \emph{strong} (every valid inference from a set of premises can be turned into a deduction). In the case of weak second-order logic, L\'opez-Escobar provided  the first weakly complete infinitary axiomatization (in both Gentzen and Hilbert calculi presentations)  to appear in print  \cite{LP}. \red{Tarski had introduced weak second-order logic in \cite{AT}, and its axiomatization problem had been proposed to L\'opez-Escobar by Mostowski according to a remark in  \cite{LP}.} L\'opez-Escobar employed  a tableaux argument   in his central result and formulated the system to handle finite sequences of objects rather than sets.  However, the second author  had already independently obtained this result by algebraic methods in his D. Phil. thesis \cite{Bell} under the supervision of John N. Crossley. This latter approach, as we shall see here, has the advantage of  being an instance of an abstract argument applying to a plethora of restricted second-order languages.

The purpose of this note is to formulate  the abstract argument  that covers both completeness proofs in \cite{Bell} (for weak second-order logic and definable subset logic) as well as several other cases (e.g. for the logics introduced in \cite{lin}).  Essentially, if a restricted second-order logic takes as the range of its second-order variables those subsets of a domain that are definable (possibly with parameters) by some countable set $\Theta$  of first-order formulas, our completeness argument will work for that system. \red{The set $\Theta$ serves thus as a parameter for the axiomatization that we provide. Neither the system in  \cite{Bell} nor the one in  \cite{LP} were parametrised in this manner, rather they were built for particular cases of the logics discussed here (namely, weak second-order logic and definable subset logic in a countable vocabulary). Thus what we introduce here is a genuine generalization of previous work.}  The algebraic techniques employed in the present article are well-known from the theory of Boolean algebras (the reader may consult \cite{Bell&Slomson, Bell3} for an introductory treatment).

\section{The axiomatization}

Let $\Theta$ be a countable set of first-order formulas. \red{The restriction to a countable set at this point has to do with the infinitary rule (R3) in the proof system that we will present below.}  By the model-theoretic language $\mathcal{L}^{2}_{\Theta}$ we mean a second-order language (with the primitives $\neg, \wedge$ and $\forall$ -and other symbols defined as usual-, \red{as well as a countable list of first-order variables $x, y, z, \dots$}) where the (countably many) second-order variables $V^{l+1}_m$  ($m, l=0, 1,2, \dots$) are meant to range over the relations of any finite arity (given by $l+1$) of a given domain that are definable (possibly with parameters) by formulas in $\Theta$.  Occasionally, we will drop the superscripts to ease the notation when they are clear from the context. We do not impose any restriction on the finite number of free variables each member of $\Theta$ should have other than the fact that it should be at least one. 

More precisely, if $\mathfrak{A}$ is a structure \red{and $A$ its domain}, let $K_{\Theta}^A $ be the collection of all $ B \subseteq A^k$ for all $k$ such that there is a formula $\theta(\overline{x}, \overline{y}) \in \Theta$ with $\overline{x}$ being a sequence of first-order variables of length $k$ (intuitively this is the arity of the relation defined by $\theta(\overline{x}, \overline{y})$), and
$$B = \{\overline{d} \mid \mathfrak{A} \models \theta[\overline{d}, \overline{e}]\} \ \text{for some sequence $\overline{e}$ of elements of $A$}.$$
 \red{In other words, the range of every second
order variable $V^{l+1}_n$ of arity $l+1$ over a structure $\mathfrak{A}$ is the set of all such defined $l+1$-ary relations
$B$.} In this case, we call $(\mathfrak{A}, K_{\Theta}^A)$  a \emph{standard structure} for $\mathcal{L}^{2}_{\Theta}$.  \red{If for some
natural number $k$, $\Theta$ contains no formula in $k$ variables, the
range of every second order variable 
$V^k_n$ of arity $k$ in a structure $(\mathfrak{A}, K_{\Theta}^A)$ is assumed to be empty.}

\begin{Exm}\label{wso}\emph{
Weak second order logic is the case where $\Theta = \{\bigvee_{i\leq n} x = y_i \mid n  \in \omega\}$ \red{since in this system the second-order variables range over the finite subsets of the domain while the formulas in $\Theta$ define with parameters all such finite subsets in  any give structure $\mathfrak{A}$}.}
\end{Exm}

\begin{Exm}\label{dsl}\emph{
Definable subset logic  \red{in a countable vocabulary} \cite{Bell} is the case where $\Theta$ is just all first-order formulas in one free variable. In other words, in definable subset logic the range of the unary second-order variables is the collection of all subsets definable by a formula in one free variable in a a structure.}

\end{Exm}

\begin{Exm} \emph{The logic of elementarily definable  (respectively elementarily definable with parameters) relations \cite{lin} \red{in a countable vocabulary} is the case  where we take $\Theta$ to be the set of all first-order formulas (and the $B$s in the definition of $K_{\Theta}^A $  above require suitable parameters).} 
\end{Exm}

\begin{Exm}\emph{\red{Recall that the hierarchy of formulas $\exists_n$ (respectively $\forall_n$) for any $n$ is defined as those having, roughly, alternating blocks of existential (universal) and universal (existential) quantifiers, cf. \cite[47-48]{Hodges}.}
$\exists_n$ (respectively $\forall_n$)-definable subset logic  \red{in a countable vocabulary} is the case where $\Theta$ is just all $\exists_n$ (respectively $\forall_n$) first-order formulas with parameters.}

\end{Exm}

Now enumerate the members of $\Theta$ as $\theta^{l+1}_n$ ($n, l = 0, 1, 2, \dots$) where $l+1$ indicates the arity of the relation determined by $\theta^{l+1}_n$ in the sense of the number of free variables of the formula that are not parameters. \red{Given a formula $\f$, we will write $\f(V_m/V_n)$ for the result of replacing $V_m$ by $V_n$ in the formula $\f$. Furthermore, we write $ \f^{l+1, m}_n$ for the result of replacing any expression $V_m^{l+1}(\overline{x})$ (that is not bound by a quantifier) in $\f$ by the formula $\theta^{l+1}_n(\overline{x}, \overline{y}) $ from our enumeration  and prefixing the resulting formula with the string of quantifiers $\All{\overline{y}}$.} The formal system for $\mathcal{L}^{2}_{\Theta}$ will contain any complete set of axioms for first-order logic in addition to the following:

\begin{itemize}
\item[] {\sc Axiom schemata}

\item[] (Comprehension)
\item[(A1)] $\All{\overline{y}}\Exi{V}\All{\overline{x}} (V(\overline{x}) \leftrightarrow \theta(\overline{x}, \overline{y}) )$ \,  \, [for each $\theta(\overline{x}, \overline{y}) \in \Theta$ ] 

\item[] (Extensionality)
\item[(A2)] $ \All{V_m, V_n}( \All{\overline{x}} (V_m(\overline{x}) \leftrightarrow V_n(\overline{x})  ) \leftrightarrow V_m = V_n )$ \,  \, [when $V_m, V_n$ have the same arity]

\item[] (Leibniz's Law)

\item[(A3)] $\All{V_m, V_n}(V_m  = V_n \rightarrow (\f \rightarrow \f') )$ \, \, where $ \f'$ results from $\f$ by replacing some of the occurrences of $V_m$ by $V_n$ and both second-order variables have the same arity.

\item[] (Quantifier Principles)

\item[(A4)] $\All{V_m} \f \rightarrow \f(V_m/V_n) $ \, \, where $V_n$ is 
  free for $V_m$ in $\f$ and both second-order variables have the same arity.

\item[(A5)] $\All{V_m} (\f \rightarrow \p) \rightarrow (\f \rightarrow \All{V_m}\p) $ \, \, where $V_m$ is not free in $\f$.

\item[]  For each $m, n \in \omega$,
 \item[(A6)]    $\All{V_m^{l+1}} \f \rightarrow \f^{l+1, m}_n$.

\end{itemize}

\begin{itemize}
\item[] {\sc Rules}
\item[(R1)]  \emph{Modus Ponens}:  
\[  \RULE{\f \rightarrow \p \qquad \f}{\p}\]
\item[(R2)] \emph{Generalization}: \[  \RULE{\f}{\forall V \f} \qquad  \RULE{\f}{\forall x \f}\]
\item[(R3)]  \emph{Infinitary rule}:  
\[  \RULE{\psi \rightarrow \f^{l+1,m}_0, \psi \rightarrow \f^{l+1,m}_1, \psi \rightarrow \f^{l+1,m}_2, \dots \qquad }{\psi \rightarrow \All{V^{l+1}_m}\f }\]
\end{itemize}

\red{Intuitively, (R3) is analogous to the central rule of inference in $\omega$-logic which plays a similar role in that context as ours here \cite{orey}. Observe that if $\Theta$ is allowed to be uncountable, there is no clear way of formulating (R3) in a sound way, as the rule requires a countable list of premises and there are too many possible values for the second-order variable $V^{l+1}_m$. For example, if our vocabulary contains a list $\{ c_\alpha\mid \alpha \in \omega_1\}$ of constants and a unary predicate $P$, and we let $\Theta = \{ y = c_\alpha\mid \alpha \in \omega_1\}$, then clearly $\{ \All{y} (y = c_\alpha \rightarrow Py) \mid \alpha \in \omega_1\} \vDash \All{X}\All{y} (Xy \rightarrow Py)$ but there is no countable subset of $\{ \All{y} (y = c_\alpha \rightarrow Py) \mid \alpha \in \omega_1\}$ from which $\All{X}\All{y} (Xy \rightarrow Py)$  follows.}

A \emph{deduction} of a formula $\f$ from a set of premises $\Sigma$ is simply a countable sequence of formulas such that $\f$ is the last member of the sequence and every element in the sequence is either a member of $\Sigma$, an axiom or it comes from previous members of the sequence by means of one of (R1)-(R3). It is an easy exercise to see that this system is sound with respect to the standard models $(\mathfrak{A}, K_{\Theta}^A)$.  In the remainder of this article we will focus on establishing the converse, namely that the system is also strongly complete with countable sets of premises.

\red{We need to say a few words here on how our models are related to Henkin's so-called `general models' \cite{henkin}. As it is well-known, second-order logic with the `standard semantics' where the second-order variables range over arbitrary subsets of a suitable Cartesian product of the domain of the models   is incomplete for any finitary axiomatization for G\"odelian reasons. Henkin \cite{henkin} famously provides a way to formulate the semantics of second-order logic for which completeness can be regained. The trick is to allow for the range of values of the second-order variables to change from one model to another, rather than be always the power set of the appropriate Cartesian product of the domain in every model, modulo some closure properties on these sets of values that guarantee every variable can be interpreted.  Observe that in our approach we have  restricted the set of possible values of the second-order variables but we have done so uniformly across all models, so we are by no means allowing as much freedom as Henkin does. In general, such lack of freedom breaks down any hope of a completeness theorem for a finitary axiomatization  but if our vocabulary is rich enough sometimes such completeness can be regained by the methods of Henkin.}
 Moreover, the limitations on axiomatizability for the systems  $\mathcal{L}^{2}_{\Theta}$  will greatly depend  on the complexity of the $\Theta$ chosen and on what kind of completeness we are interested in as we will see in the following remarks. 

\medskip

\paragraph{\em Weak second-order logic} The set of validities of Example \ref{wso} is well-known not to be  axiomatizable by a finitary system \cite{Montague}. To see this, observe that the standard model of arithmetic $(\omega, +, \cdot, S,  0)$ is axiomatizable by the conjunction of the \red{(finitely many) axioms of Robinson arithmetic (which can be formalized in first-order logic already)} and the statement that every element has only finitely many predecessors \red{(which can be written as $\All{x}\Exi{V}\All{y}(y<x \rightarrow V(y))$)} \cite[p. 488]{monk}. Thus if the validities of weak second-order logic would be recursively enumerable, true arithmetic would be as well, contradicting Tarski's theorem on the undefinability of truth.

\medskip 

\paragraph{\em Definable subset  logic} In the case of Example \ref{dsl}, as it is reported towards the end of \cite{lin}, Peter Aczel showed that  the finitary system without R3 or A6 axiomatizes the valid formulas if the vocabulary of the logic is allowed to have  denumerably many predicate constants (i.e.  it is weakly complete). This is done by adding a new countable set of predicate constants and building a Henkin theory where each second-order existential theorem of the theory is witnessed by one of the predicate constants. The argument proceeds from here in the usual style of Henkin. 

On the other hand, no strong completeness result is possible by employing a finitary axiomatization as it would imply compactness, which fails for definable subset logic. The latter can be seen by adding a new individual constant $c$ to the vocabulary of arithmetic and considering the theory $\Delta$ which results by adding to $\text{Th}^{DSL}(\omega)$ (the set of all definable subset sentences true in  $(\omega, +, \cdot, S,  0)$) the following sentences:
$$\{\neg \f(c) \mid  \f(x) \ \text{is a first-order formula in vocabulary  $\{+, \cdot, S, 0\}$ defining an element of} \ (\omega, +, \cdot, S, 0)\}. $$
Then every finite $\Delta' \subset \Delta$ has a model, namely $(\omega, +, \cdot, S, 0)$ can be expanded into a model of $\Delta'$. However, we can see that $\Delta$ itself cannot have a model $\mathfrak{A}$. Otherwise its reduct,   $\mathfrak{A}'$,  to the vocabulary $\{+, \cdot, S, 0\}$, being a model of $\text{Th}^{DSL}(\omega)$,  would be an elementary extension in the sense of definable subset logic of $(\omega, +, \cdot, S, 0)$. Since  $ (\omega, +, \cdot, S, 0)\models \All{x}\Exi{V}\All{y}(Vy \leftrightarrow x=y)$, we have that $\mathfrak{A}' \models \All{x}\Exi{V}\All{y}(Vy \leftrightarrow x=y)$, but then $c^{\mathfrak{A}}$ (the denotation of $c$ in $\mathfrak{A}$) must be definable in $\mathfrak{A}'$ by a formula $\f$ of first-order arithmetic  and hence, given that $\mathfrak{A}' \models \Exi{x} \All{y} (\f(y) \leftrightarrow x=y)$, we have that $(\omega, +, \cdot, S, 0) \models \Exi{x} \All{y} (\f(y) \leftrightarrow x=y)$, which contradicts the fact that $\mathfrak{A} \models \neg \f (c)$ by definition of $\Delta$.

\medskip 

\paragraph{\em Elementarily definable relation logic}  Lindstr\"om has shown in \cite{lin} that the set of validities of this logic in a sufficiently rich (in the sense of containing the vocabulary of arithmetic) \emph{finite} vocabulary is not just not recursively enumerable but $\Pi^1_1$-complete. By a rather clever argument he shows that one can implicitly define the standard model of arithmetic $(\omega, +, \cdot, S,  0)$ by a sentence in this logic. In contrast, when the vocabulary is allowed to be infinite, the same argument by Aczel mentioned above works to axiomatize the validities in this context. However, once more since compactness is lost, no finitary axiomatization could yield strong completeness.

\section{The (Strong) Completeness theorem}

In this section we will present the completeness argument that generalises the concrete instances in \cite{Bell}. We start  by recalling and introducing some notions about Boolean algebras in the next definitions (we use the notation from \cite[Chp. 4]{Bell3}) \red{where $\wedge$ and  $*$   are the meet  and  complement  operations, respectively.}
 \begin{Def}
 Let $\mathcal{B}$ be a Boolean algebra. A subset $U$  of $\mathcal{B}$ is an \emph{ultrafilter} if 
 \begin{item}
 \item[(i)] $1 \in U, 0 \notin U$,
 \item[(ii)] $a, b \in U $ only if $a \wedge b \in U$,
  \item[(iii)] $a \in U$ and $a \leq b$ only if $b \in U$,
    \item[(iv)] for any $a$ from $\mathcal{B}$, either $a \in U$ or $a^* \in U$.
 \end{item}
   \end{Def}
\begin{Def}
 Let $\mathcal{B}$ be a Boolean algebra and $\mathcal{F}$ a family of subsets of $\mathcal{B}$. We will say that $\mathcal{F}$ is \emph{regular} if each member of $\mathcal{F}$ has a join and a meet in $\mathcal{B}$. 
 \end{Def}

 \begin{Def}
 Let $\mathcal{B}$ be a Boolean algebra and $\mathcal{F}$ a regular family of subsets of $\mathcal{B}$.
 If $U$ is an ultrafilter in  $\mathcal{B}$, we will call it \emph{$\mathcal{F}$-compatible} when for each $S \in \mathcal{F}$ the following holds:
\begin{itemize}
\item[(i)] $\bigvee S \in U $ iff $S \cap U \neq \emptyset$,
\item[(ii)] $\bigwedge S \in U $ iff $S \subseteq U $. 
\end{itemize}
\end{Def}
\medskip

Given a countable set $\Sigma$ of  $\mathcal{L}^{2}_{\Theta}$-sentences, we can build the Lindenbaum algebra $\mathscr{L}(\Sigma)$ as the algebra of equivalence classes  $\llbracket\f\rrbracket$ of formulas of $\mathcal{L}^{2}_{\Theta}$ under the equivalence relation $\Sigma \vdash \f \leftrightarrow \p$ and the quotient operations derived from the connectives. The lattice ordering on the algebra $\mathscr{L}(\Sigma)$ is simply $\llbracket\f\rrbracket \leq \llbracket\p\rrbracket$ iff $\Sigma \vdash \f \rightarrow \p$. Naturally, $\mathscr{L}(\Sigma)$  is a Boolean algebra.

Now we can establish a fact that  will make essential use of the infinitary rule of our axiomatization and will be needed in our completeness argument:
\begin{Lem}\label{reg} Let $\mathscr{L}(\Sigma)$ be the Lindenbaum algebra of a countable set $\Sigma$ of  $\mathcal{L}^{2}_{\Theta}$-sentences. Then for each formula $\f$ of $\mathcal{L}^{2}_{\Theta}$,
\begin{itemize}
\item[(i)] $\llbracket\All{x_m} \f\rrbracket  = \bigwedge_{n \in \omega} \llbracket\f(x_m/x_n)\rrbracket$,
\item[(ii)] $\llbracket \Exi{x_m} \f\rrbracket = \bigvee_{n \in \omega} \llbracket\f(x_m/x_n)\rrbracket$,
\item[(iii)] $\llbracket\All{V_m} \f\rrbracket  = \bigwedge_{n \in \omega} \llbracket\f(V_m/V_n)\rrbracket$,
\item[(iv)] $\llbracket\Exi{V_m} \f\rrbracket  = \bigvee_{n\in \omega} \llbracket\f(V_m/V_n)\rrbracket$,
\item[(v)] $\llbracket\All{V_m} \f\rrbracket  = \bigwedge_{n\in \omega} \llbracket\f^m_n\rrbracket$,
\item[(vi)] $\llbracket\Exi{V_m} \f\rrbracket  = \bigvee_{n\in \omega} \llbracket\f^m_n\rrbracket$.
\end{itemize}

\end{Lem}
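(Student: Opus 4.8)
The plan is to prove each identity by showing that its left-hand side is, in the ordering of $\mathscr{L}(\Sigma)$, the greatest lower bound of the displayed family in the $\bigwedge$-clauses and dually the least upper bound in the $\bigvee$-clauses. Unwinding $\leq$ into provability, for (i) this amounts to two claims: that $\Sigma \vdash \All{x_m}\f \rightarrow \f(x_m/x_n)$ for each $n$ (lower bound), and that $\Sigma \vdash \p \rightarrow \f(x_m/x_n)$ for \emph{all} $n$ forces $\Sigma \vdash \p \rightarrow \All{x_m}\f$ (maximality). Because $\mathscr{L}(\Sigma)$ is Boolean and $\Exi{}$ abbreviates $\neg\All{}\neg$, the three existential clauses are the De Morgan duals of the universal ones: once the relevant meet is known to exist, $\bigvee S = (\bigwedge\{s^{*} : s \in S\})^{*}$, so (ii), (iv) and (vi) drop out of (i), (iii) and (v). Hence I would treat only the three universal clauses.

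For (i) I would get the lower bound as an instance of the first-order universal-instantiation axiom (the elementary counterpart of A4), reading $\f(x_m/x_n)$ as capture-avoiding substitution. For maximality, assuming $\Sigma \vdash \p \rightarrow \f(x_m/x_n)$ for every $n$, I would pick a variable $x_k$ occurring in neither $\p$ nor $\f$ --- possible since only finitely many variables occur in them while $\omega$-many are available --- so that $\Sigma \vdash \p \rightarrow \f(x_m/x_k)$. As $x_k$ is free neither in $\Sigma$ (a set of sentences) nor in $\p$, generalizing on $x_k$ via R2 and then distributing the quantifier with the first-order instance of A5 gives $\Sigma \vdash \p \rightarrow \All{x_k}\f(x_m/x_k)$, and $\alpha$-conversion identifies $\All{x_k}\f(x_m/x_k)$ with $\All{x_m}\f$, yielding $\Sigma \vdash \p \rightarrow \All{x_m}\f$. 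Clause (iii) would be proved word for word with second-order variables in place of first-order ones: the lower bound is now literally A4, and maximality uses the $\forall V$ form of R2 together with A5, the index $n$ ranging over second-order variables $V_n$ of the same arity as $V_m$.

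The real content is clause (v), and it is exactly here that the infinitary rule is indispensable. The lower bound $\llbracket\All{V_m^{l+1}}\f\rrbracket \leq \llbracket\f^{l+1,m}_n\rrbracket$, for each $n$, is nothing but axiom A6. For maximality, suppose $\Sigma \vdash \p \rightarrow \f^{l+1,m}_n$ for every $n \in \omega$; these are precisely the countably many premises of R3, whose conclusion is $\p \rightarrow \All{V^{l+1}_m}\f$. I would form a single deduction from $\Sigma$ by concatenating the countably many deductions of the $\p \rightarrow \f^{l+1,m}_n$ into one countable sequence and appending a single application of R3, which is permissible because deductions may be countable sequences and R3 is among their rules. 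This gives $\Sigma \vdash \p \rightarrow \All{V^{l+1}_m}\f$, i.e. $\llbracket\p\rrbracket \leq \llbracket\All{V^{l+1}_m}\f\rrbracket$, and (vi) follows by duality.

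I expect the only delicate part to be the substitution bookkeeping in (i) and (iii): one must secure a genuinely fresh witnessing variable, avoid capture in the substitutions, and check that the passage from $\Sigma \vdash \p \rightarrow \f(x_m/x_k)$ to $\Sigma \vdash \p \rightarrow \All{x_k}\f(x_m/x_k)$ legitimately combines R2 with A5, which is available precisely because $\Sigma$ contains no free variables and $x_k \notin \mathrm{FV}(\p)$. Clause (v), by contrast, is immediate once R3 is on the table --- the whole weight of the argument has been engineered into that rule, whose effect is exactly to make $\All{V_m}\f$ the meet of its $\Theta$-instances $\f^m_n$. I would therefore present (v)/(vi) as the conceptual core (and the clauses actually needed to force standardness of the models built in the completeness proof) and (i)--(iv) as routine Lindenbaum-algebra computations.
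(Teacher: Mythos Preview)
Your proposal is correct and is precisely the ``relatively familiar'' argument the paper only gestures at: the paper's own proof is a one-line reference to standard Lindenbaum-algebra computations using the quantifier axioms together with R2--R3, singling out R3 as the ingredient for (v) and (vi), and that is exactly what you spell out. Your identification of A6 as the lower-bound half of (v) and of a single application of R3 as the greatest-lower-bound half is the intended content, and your handling of (i)--(iv) via a fresh variable, R2, A5 and $\alpha$-conversion is the textbook route the citation to Bell--Slomson points to.
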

\begin{proof}
This can be seen by relatively familiar arguments (see \cite{Bell&Slomson}) using (Quantifier Principles) and R2-R3. In particular, R3 is needed for the proof of (v) and, dually, (vi).
\end{proof}

Let $\Sigma \cup \{\f\}$ now be a countable set of $\mathcal{L}^{2}_{\Theta}$-sentences such that $\Sigma \nvdash \{\f\}$. Take the family $\mathcal{F}$  of subsets of the Lindenbaum algebra $\mathscr{L}(\Sigma)$ consisting of all sets of the form 
$$\{\llbracket \p (x_n/x_m) \rrbracket \mid m \in \omega\}, \{\llbracket \p (V_n/V_m)\rrbracket \mid m \in \omega\}, \{\llbracket \p^n_m \rrbracket \mid m\in \omega\}$$
for arbitrary formulas $\p$ of $\mathcal{L}^{2}_{\Theta}$. This is a countable regular family by Lemma \ref{reg}, and by using the Rasiowa-Sikorski lemma \cite{RS}, we may obtain for any non-unit element of $\mathscr{L}(\Sigma)$, and hence in particular for $\llbracket \f\rrbracket $, an $\mathcal{F}$-compatible ultrafilter $U$ not containing that element,  so in this case, $\llbracket \f\rrbracket \notin U$. For each first-order variable $x$ we denote by $\widehat{x}$ the equivalence class $\{y \mid \llbracket x=y\rrbracket \in U\}$ and by $F_m$ the set $\{\red{\langle \widehat{x}_0, \dots, \widehat{x}_l \rangle} \mid \llbracket V_m(x_0, \dots, x_l )\rrbracket \in U\}$ when $l$ is the arity of the relation variable $V_m$.

\begin{Lem}\label{use1} For each second-order variable $V_m$, $\overline{y}$ a sequence of first-order variables of the same length as the arity of $V_m$, there is a finite set $\{x_0, \dots, x_k\}$ of first-order variables and a natural number $n$ such that $$\llbracket\All{\overline{y}}(V_m (\overline{y}) \leftrightarrow \theta_n(\overline{y}, x_0, \dots, x_k))\rrbracket \in U.$$ 

\end{Lem}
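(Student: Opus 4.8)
The plan is to exhibit the desired biconditional as one of the disjuncts of a \emph{provable} second-order existential statement, and then to use $\mathcal{F}$-compatibility to pull that disjunct into $U$. Concretely, let $V_p$ be a fresh second-order variable of the same arity $l+1$ as $V_m$ and consider
\[
\chi := \All{\overline{y}}(V_p(\overline{y}) \leftrightarrow V_m(\overline{y})).
\]
The key observation is that $V_m$ itself witnesses $\Exi{V_p}\chi$: the result of replacing $V_p$ by $V_m$ in $\chi$ is $\All{\overline{y}}(V_m(\overline{y}) \leftrightarrow V_m(\overline{y}))$, which is provable (the universal closure, via R2, of a propositional tautology), so existential generalization (the dual of A4) yields $\vdash \Exi{V_p}\chi$. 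Hence $\llbracket \Exi{V_p}\chi\rrbracket = 1 \in U$.

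Next I would invoke Lemma \ref{reg}(vi), which expresses this top element as the join
\[
\llbracket \Exi{V_p}\chi\rrbracket = \bigvee_{n \in \omega} \llbracket \chi^p_n\rrbracket,
\]
where $\chi^p_n$ is the instance of $\chi$ obtained by replacing $V_p$ with $\theta^{l+1}_n$; it is precisely here that the infinitary rule R3 (through Lemma \ref{reg}) and axiom A6 do the real work. The set $S = \{\llbracket \chi^p_n\rrbracket \mid n \in \omega\}$ is one of the sets belonging to $\mathcal{F}$ by construction, so clause (i) of $\mathcal{F}$-compatibility together with $\bigvee S = 1 \in U$ gives some $n$ with $\llbracket \chi^p_n\rrbracket \in U$. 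Unwinding the substitution, $\chi^p_n$ is $\All{\overline{y}}(\theta^{l+1}_n(\overline{y}, x_0, \dots, x_k) \leftrightarrow V_m(\overline{y}))$ for the parameter variables $x_0, \dots, x_k$ introduced by $\theta^{l+1}_n$; this is provably equivalent to the formula in the statement, so its class also lies in $U$, as required.

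The step I expect to demand the most care is the bookkeeping around A6, namely verifying that $\chi^p_n$ really takes the displayed form after the prescribed replacement of $V_p(\overline{y})$ by $\theta^{l+1}_n(\overline{y}, x_0, \dots, x_k)$ (the universal prefixing of the already-bound tuple $\overline{y}$ being harmless) and that the parameter variables $x_0, \dots, x_k$ are correctly identified with the free first-order variables in the statement. Everything else is a routine combination of the provability of the witnessed existential, the join computation of Lemma \ref{reg}(vi), and clause (i) of $\mathcal{F}$-compatibility; notably, no appeal to the particular $\Sigma$ or to the fact that $\llbracket\f\rrbracket \notin U$ is needed, since $\Exi{V_p}\chi$ is a logical validity.
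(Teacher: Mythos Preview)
Your argument is correct and is in fact cleaner than the paper's. Both proofs rest on the same ingredient---the $\mathcal{F}$-compatibility of $U$ with the family $\{\llbracket\chi^{p}_{n}\rrbracket \mid n\in\omega\}$ coming from Lemma~\ref{reg}(v)--(vi)---but they run it in opposite directions. The paper argues by contradiction: assuming no $n$ and no parameters work, it negates, rewrites $\neg(V_m\leftrightarrow\theta_n)$ as the conditional ``$\theta_n\subseteq V_m \Rightarrow V_m\not\subseteq\theta_n$'', then uses $\mathcal{F}$-compatibility twice (first to universally close over the first-order parameters, then to pass to $\All{V_p}$ via the meet in Lemma~\ref{reg}(v)), and finally instantiates $V_p:=V_m$ to reach the absurdity that $V_m$ strictly contains itself. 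You instead go straight for the existential: since $\Exi{V_p}\All{\overline{y}}(V_p(\overline{y})\leftrightarrow V_m(\overline{y}))$ is a theorem, its class is $1\in U$, and one application of $\mathcal{F}$-compatibility with the join in Lemma~\ref{reg}(vi) hands you the witness. Your route is shorter, avoids the detour through the auxiliary conditional, and needs only one appeal to compatibility rather than two; the paper's version, being contrapositive, perhaps makes the role of the instantiation $V_p:=V_m$ more visible, but otherwise there is no real gain.

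One small correction to your bookkeeping: the universal prefix added in forming $\chi^{p}_{n}$ binds the \emph{parameter} variables $x_0,\dots,x_k$ introduced by $\theta^{l+1}_n$, not the already-bound tuple $\overline{y}$. So $\chi^{p}_{n}$ is literally $\All{x_0,\dots,x_k}\All{\overline{y}}(\theta^{l+1}_n(\overline{y},x_0,\dots,x_k)\leftrightarrow V_m(\overline{y}))$; once this lies in $U$, a single first-order instantiation (any choice of the $x_i$ does) yields the formula in the statement. This is harmless for your argument---indeed it makes the conclusion slightly stronger---but it is worth stating the substitution correctly.
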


\begin{proof} Suppose for a contradiction that for each finite set $\{x_0, \dots, x_k\}$ of first-order variables and  natural number $n$, $$\llbracket\All{\overline{y}}(V_m (\overline{y}) \leftrightarrow \theta_n(\overline{y}, x_0, \dots, x_k))\rrbracket \notin U.$$ 
Since our axiomatization contains a complete set of axioms for first-order logic, for any first-order formula $\f$,  $\llbracket\neg \All{x} \f\rrbracket\leq \llbracket  \Exi{x}  \neg\f\rrbracket$. Consequently, given that $U$ is an ultrafilter in a Boolean algebra, for each set $\{x_0, \dots, x_k\}$  and  natural number $n$,
$$\llbracket\Exi{\overline{y}}\neg (V_m (\overline{y}) \leftrightarrow \theta_n(\overline{y}, x_0, \dots, x_k))\rrbracket \in U,$$ 
and, once more, since we have a complete set of axioms for first-order logic,
$$\llbracket \All{\overline{y}}(\theta_n(\overline{y}, x_0, \dots, x_k) \rightarrow V_m (\overline{y})) \rightarrow \Exi{\overline{y}} (V_m (\overline{y}) \wedge  \neg \theta_n(\overline{y}, x_0, \dots, x_k))\rrbracket\in U.$$ 
Since $U$ is $\mathcal{F}$-compatible \red{and using Lemma \ref{reg} (i)}, for each natural number $n$ and $\{x_0, \dots, x_k\}$, $$\llbracket\All{x_0, \dots, x_k}( \All{\overline{y}}(\theta_n(\overline{y}, x_0, \dots, x_k) \rightarrow V_m (y)) \rightarrow \Exi{\overline{y}} (V_m (\overline{y}) \wedge  \neg \theta_n(\overline{y}, x_0, \dots, x_k)))\rrbracket\in U.$$ 
By $\mathcal{F}$-compatibility \red{and  Lemma \ref{reg} (v)}, 
$$\llbracket\All{V_p} (\All{\overline{y}}(V_p(\overline{y}) \rightarrow V_m (\overline{y})) \rightarrow \Exi{\overline{y}} (V_m (\overline{y}) \wedge  \neg V_p(\overline{y})))\rrbracket\in U.$$ 
Thus, $$0=\llbracket\All{y}(V_m(\overline{y}) \rightarrow V_m (\overline{y})) \rightarrow \Exi{\overline{y}} (V_m (\overline{y}) \wedge  \neg V_m(\overline{y}))\rrbracket \in U,$$ which is a contradiction as $U$ is an ultrafilter.

\end{proof}

\begin{Lem}\label{use} For each finite set $\{x_0, \dots, x_k\}$ of first-order variables and  natural number $n$, there is a $V_m$ such that $$\llbracket\All{\overline{y}}(V_m (\overline{y}) \leftrightarrow \theta_n(\overline{y}, x_0, \dots, x_k))\rrbracket \in U$$
where $\overline{y}$ is sequence of the same length as the arity of $V_m$.

\end{Lem}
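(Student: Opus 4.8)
The plan is to read this lemma as the exact dual of Lemma~\ref{use1}: where that argument used the infinitary rule and the Quantifier Principle (A6) to \emph{eliminate} a second-order universal, here I would use the Comprehension schema (A1) together with the existential clause of Lemma~\ref{reg} and the witnessing half of $\mathcal{F}$-compatibility to \emph{manufacture} a second-order variable realizing the given definition. Throughout, fix the parameters $x_0, \dots, x_k$ and the index $n$, and let $\overline{y}$ be a block of first-order variables whose length is the arity determined by $\theta_n$.

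First I would instantiate Comprehension. For the formula $\theta_n$, axiom (A1) reads
$$\All{x_0, \dots, x_k}\Exi{V}\All{\overline{y}}(V(\overline{y}) \leftrightarrow \theta_n(\overline{y}, x_0, \dots, x_k)),$$
so its class is the unit of $\mathscr{L}(\Sigma)$ and hence lies in $U$. Instantiating the leading block of first-order universal quantifiers at the very variables $x_0, \dots, x_k$ is a theorem of first-order logic, whence
$$\llbracket\Exi{V}\All{\overline{y}}(V(\overline{y}) \leftrightarrow \theta_n(\overline{y}, x_0, \dots, x_k))\rrbracket \in U.$$
Alternatively, since the unit is in $U$, clause (ii) of $\mathcal{F}$-compatibility applied to Lemma~\ref{reg}(i) forces every first-order instance into $U$, in particular the one at $x_0, \dots, x_k$.

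Next I would cash in the second-order existential. Write $\f$ for the matrix $\All{\overline{y}}(V(\overline{y}) \leftrightarrow \theta_n(\overline{y}, x_0, \dots, x_k))$, so the class just displayed is $\llbracket\Exi{V}\f\rrbracket \in U$. Clause (iv) of Lemma~\ref{reg} gives
$$\llbracket\Exi{V}\f\rrbracket = \bigvee_{m \in \omega} \llbracket\f(V/V_m)\rrbracket,$$
and the set on the right is one of the generators of the regular family $\mathcal{F}$. Since this join belongs to $U$, clause (i) of $\mathcal{F}$-compatibility produces an index $m$ with $\llbracket\f(V/V_m)\rrbracket \in U$, that is,
$$\llbracket\All{\overline{y}}(V_m(\overline{y}) \leftrightarrow \theta_n(\overline{y}, x_0, \dots, x_k))\rrbracket \in U,$$
which is precisely the conclusion of the lemma.

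No individual step is genuinely hard; as in a Henkin construction, all the work is carried by Comprehension and by the existential half of $\mathcal{F}$-compatibility. The one point demanding care is the bookkeeping of arities: I must ensure that the variable $V$ bound in (A1), the substituends $V_m$ indexing the $\mathcal{F}$-generator, and the tuple $\overline{y}$ all carry the single arity fixed by $\theta_n$, so that the join supplied by Lemma~\ref{reg}(iv) ranges exactly over second-order variables of the correct type. I expect this typing discipline, rather than any logical subtlety, to be the only place where the argument could slip.
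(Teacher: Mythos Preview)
Your proposal is correct and follows essentially the same route as the paper: invoke (A1) to place the comprehension axiom in $U$, then use $\mathcal{F}$-compatibility to strip the quantifier prefix and extract a witnessing $V_m$. The paper compresses your two applications of $\mathcal{F}$-compatibility (first-order instantiation via Lemma~\ref{reg}(i), then existential witnessing via Lemma~\ref{reg}(iv)) into a single phrase, but the argument is the same.
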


\begin{proof} By (A1), we have that $$\llbracket\All{x_0, \dots, x_k}\Exi{V}\All{\overline{y}}(V (\overline{y}) \leftrightarrow \theta_n(\overline{y}, x_0, \dots, x_k))\rrbracket\in U.$$ Thus, by $\mathcal{F}$-compatibility of $U$ \red{and  Lemma \ref{reg} (iv)}, we can infer that for some $m$, $$\llbracket\All{\overline{y}}(V_m (\overline{y}) \leftrightarrow \theta_n(\overline{y}, x_0, \dots, x_k))\rrbracket \in U.$$

\end{proof}

With Lemmas \ref{use1} and \ref{use} in hand, we are now in a position to prove a model existence theorem:

\begin{Thm}[Model Existence] Let $\Sigma \cup \{\f\}$ be a countable set of $\mathcal{L}^{2}_{\Theta}$-sentences. Then if $\Sigma \nvdash \f$ there is a model $(\mathfrak{A}, K_\Theta^A) $ where $(\mathfrak{A}, K_\Theta^A)\models \Sigma $ and $(\mathfrak{A}, K_\Theta^A)\not \models \f $.

\end{Thm}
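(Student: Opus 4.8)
The plan is to build the term model directly from the $\mathcal{F}$-compatible ultrafilter $U$ already constructed, taking as the first-order domain the set of equivalence classes $\widehat{x}$ of first-order variables under the relation $\llbracket x = y\rrbracket \in U$. First I would define the interpretation of the first-order vocabulary on $\mathfrak{A}$: for a relation symbol $R$, set $\mathfrak{A} \models R[\widehat{x}_0,\dots,\widehat{x}_l]$ iff $\llbracket R(x_0,\dots,x_l)\rrbracket \in U$, and interpret function and constant symbols analogously, using Leibniz's Law and the first-order axioms to check that these are well-defined on the equivalence classes (independence from the choice of representatives follows from extensionality of $=$ in $U$). The second-order part of the structure is the collection $K_\Theta^A$ determined by $\mathfrak{A}$ as in the definition of a standard structure; the work is to show that the $F_m$ recovered from the relation variables coincide with the members of $K_\Theta^A$, so that the interpretation is genuinely a standard structure.

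The central device is a Truth Lemma: for every sentence $\psi$ of $\mathcal{L}^2_\Theta$ (and, more generally, for every formula under an assignment sending each free variable $x_n$ to $\widehat{x}_n$ and each free $V_n$ to $F_n$), one has $(\mathfrak{A}, K_\Theta^A) \models \psi$ iff $\llbracket\psi\rrbracket \in U$. I would prove this by induction on the complexity of $\psi$. The atomic case is immediate from the definition of the interpretation; the Boolean connectives use only that $U$ is an ultrafilter (clauses (i)--(iv) of the ultrafilter definition). The first-order quantifier cases are handled by the $\mathcal{F}$-compatibility of $U$ together with Lemma~\ref{reg}(i)--(ii): since $\llbracket\All{x_m}\psi\rrbracket = \bigwedge_{n}\llbracket\psi(x_m/x_n)\rrbracket$ is a meet of a member of $\mathcal{F}$, compatibility gives $\llbracket\All{x_m}\psi\rrbracket \in U$ iff every $\llbracket\psi(x_m/x_n)\rrbracket \in U$, which by the induction hypothesis matches the semantic clause once we know every element of $A$ is named by some $\widehat{x}_n$. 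The second-order quantifier cases proceed the same way, now invoking Lemma~\ref{reg}(iii)--(vi) and compatibility with the families $\{\llbracket\psi(V_n/V_m)\rrbracket\}$ and $\{\llbracket\psi^n_m\rrbracket\}$.

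The step I expect to be the main obstacle is the second-order quantifier case of the Truth Lemma, because it requires reconciling two different ways the second-order variables range: syntactically through the witnesses $V_n$ and through the defining formulas $\theta^{l+1}_n$. This is exactly where Lemmas~\ref{use1} and~\ref{use} do their work. Lemma~\ref{use} guarantees that every subset of $A^{l+1}$ arising from a $\theta_n$ with parameters (that is, every member of $K_\Theta^A$) is realised as some $F_m$, so the semantic range $K_\Theta^A$ is covered by the syntactic witnesses; conversely, Lemma~\ref{use1} guarantees that every $F_m$ coming from a relation variable is in fact $\theta_n$-definable with parameters, hence genuinely lies in $K_\Theta^A$. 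Together these identify $\{F_m \mid m\in\omega\}$ with $K_\Theta^A$, so that quantification over the $V_m$ in the algebra corresponds precisely to quantification over $K_\Theta^A$ in the structure, and the two forms of Lemma~\ref{reg} (the $V_n$-version and the $\theta^n_m$-version) can be used interchangeably to close the induction.

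Once the Truth Lemma is established, the theorem follows at once: since $\llbracket\f\rrbracket \notin U$, the lemma gives $(\mathfrak{A}, K_\Theta^A) \not\models \f$, while every $\sigma \in \Sigma$ satisfies $\llbracket\sigma\rrbracket = 1 \in U$ (as $\Sigma \vdash \sigma$), whence $(\mathfrak{A}, K_\Theta^A) \models \Sigma$. I would finish by remarking that $\Sigma$ and $\f$ being sentences makes the assignment irrelevant to their truth values, so no subtlety about free variables intrudes in the final step.
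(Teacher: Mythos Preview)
Your proposal is correct and follows essentially the same route as the paper: build the term model on equivalence classes $\widehat{x}$ from the $\mathcal{F}$-compatible ultrafilter $U$, prove the Truth Lemma by induction on complexity (using Lemma~\ref{reg} and $\mathcal{F}$-compatibility for the quantifier cases), and invoke Lemmas~\ref{use1} and~\ref{use} to identify $\{F_m\mid m\in\omega\}$ with $K_\Theta^A$ so that the second-order quantifier case goes through. You supply more detail than the paper's terse version, but the architecture is the same.
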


\begin{proof} \red{To simplify the presentation suppose that our vocabulary consists of the predicates $P_i$ ($i \in \omega$) each with its own arity.} We take the $\mathcal{F}$-compatible ultrafilter $U$ obtained by the Rasiowa-Sikorski lemma for $\mathscr{L}(\Sigma)$  described above such that $\llbracket \f\rrbracket \notin U$. We build a canonical model as follows. Let the domain be the set $I$ containing all equivalence classes $\widehat{x}$ for each first-order variable $x $ from our countable \red{supply of such variables $x, y, z, \dots$} Consider now the structure $\mathfrak{A} = (I, R_0, R_2, \dots, R_n, \dots)$ where each $R_i$ is an interpretation for the predicate constant $P_i$ defined as follows:
$$ R_i = \{ \tuple{\widehat{x}_0, \dots, \widehat{x}_{p(i)}}\mid \llbracket P_i (x_0, \dots, x_{p(i)})\rrbracket \in U\}$$
where $p(i)$ is simply the arity of  $P_i$.

For an $\mathcal{L}^{2}_{\Theta}$-formula $\p(V_1, \dots, V_n; x_0, \dots, x_m)$,  by interpreting each $V_i$ as $F_i$ \red{(that recall we have defined as $\{\widehat{x}_0, \dots, \widehat{x}_l \mid \llbracket V_i(x_0, \dots, x_l )\rrbracket \in U\}$)}, we can see by induction on the complexity of $\p$ that $$\mathfrak{A} \models \p[F_1, \dots, F_n; \widehat{x}_0, \dots, \widehat{x}_m] \ \text{iff} \ \llbracket \p \rrbracket \in U.$$
Our next step is to show that $K_\Theta^A = \{F_i \mid i\in \omega\}$. Let us show first that $K_\Theta^A \subseteq \{F_i \mid  i\in \omega\}$. Suppose then that $B  \in K_\Theta^A$, so \red{$B \subseteq I^k$} and there is $ \theta(\overline{x}, \overline{y}) \in \Theta \ \text{s.t.} \ B = \{\overline{d} \mid \mathfrak{A} \models \theta[\overline{d}, \overline{e}]\}$ for some sequence $\overline{e}$ of elements of $I$. Say that $\overline{e}$ is the sequence $\widehat{x}_0, \dots, \widehat{x}_l$. By Lemma \ref{use}, there is a $V_m$ such that $$\llbracket\All{\overline{y}}(V_m (\overline{y}) \leftrightarrow \theta_n(\overline{y}, x_0, \dots, x_l))\rrbracket \in U.$$ 
Hence, $F_m = B$. In a similar manner, using Lemma \ref{use1}, we can show $  \{F_i \mid i>0\} \subseteq K_\Theta^A$.
Then by the choice of $U$, $(\mathfrak{A}, K_\Theta^A) \models \Sigma$ and  $(\mathfrak{A}, K_\Theta^A) \not \models \f$.
\end{proof}

\begin{Thm}[Strong Completeness]\label{com}
Let $\Sigma$ be a countable set of $\mathcal{L}^{2}_{\Theta}$-sentences. Then for any $\mathcal{L}^{2}_{\Theta}$-sentence $\f$, we have that $\Sigma \vDash \f$ implies that $\Sigma \vdash \f$.
\end{Thm}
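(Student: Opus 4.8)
The plan is to derive Strong Completeness directly from the Model Existence theorem by contraposition. Suppose $\Sigma \vDash \f$ but, for contradiction, $\Sigma \nvdash \f$. The hypothesis $\Sigma \nvdash \f$ is precisely the antecedent needed to invoke the Model Existence theorem, so I would apply it immediately.

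First I would observe that since $\Sigma \cup \{\f\}$ is a countable set of $\mathcal{L}^{2}_{\Theta}$-sentences and $\Sigma \nvdash \f$, the Model Existence theorem furnishes a standard model $(\mathfrak{A}, K_\Theta^A)$ with $(\mathfrak{A}, K_\Theta^A) \models \Sigma$ and $(\mathfrak{A}, K_\Theta^A) \not\models \f$. Next I would note that this model witnesses the failure of the semantic consequence $\Sigma \vDash \f$: by definition, $\Sigma \vDash \f$ asserts that every standard model of $\Sigma$ is a model of $\f$, yet we have exhibited a standard model of $\Sigma$ in which $\f$ fails. This contradicts the assumption $\Sigma \vDash \f$, and so the assumption $\Sigma \nvdash \f$ must be rejected, yielding $\Sigma \vdash \f$.

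Since all the substantive work has been carried out in the Model Existence theorem and the supporting Lemmas (in particular the verification that $K_\Theta^A = \{F_i \mid i \in \omega\}$, which guarantees that the canonical structure is a \emph{standard} structure for $\mathcal{L}^{2}_{\Theta}$ rather than some larger second-order structure), the completeness theorem itself is a short formal consequence. There is essentially no obstacle remaining at this stage: the entire difficulty of the argument lies upstream, in constructing the $\mathcal{F}$-compatible ultrafilter via Rasiowa--Sikorski and in proving, through Lemmas \ref{use1} and \ref{use}, that the range $K_\Theta^A$ of the second-order quantifiers in the canonical model coincides exactly with the definable sets $\{F_i\}$ picked out by $U$. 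The only point that warrants a line of care is ensuring that the model produced is genuinely standard, so that it is admissible as a counterexample to $\Sigma \vDash \f$; this is exactly what the Model Existence theorem delivers, so I would simply cite it and close the contradiction.
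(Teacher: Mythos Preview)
Your proposal is correct and is essentially identical to the paper's own proof: both argue by contraposition, assuming $\Sigma \nvdash \f$ and invoking the Model Existence theorem to obtain a standard structure $(\mathfrak{A}, K_\Theta^A)$ satisfying $\Sigma$ but not $\f$, which witnesses $\Sigma \nvDash \f$. The additional commentary you provide about the upstream lemmas is accurate but not needed for this step.
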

\begin{proof}
Suppose that $\Sigma \nvdash \f$. Then, by the Model Existence theorem, we have a structure $(\mathfrak{A}, K_\Theta) $ such that $(\mathfrak{A}, K_\Theta)\models \Sigma $ and $(\mathfrak{A}, K_\Theta)\not \models \f $.
\end{proof}

\red{\begin{Rmk}\emph{From Theorem \ref{com}, we can immediately get a Deduction Theorem: \emph{Let $\Sigma \cup \{\f, \p\}$ be a countable set of $\mathcal{L}^{2}_{\Theta}$-sentences. Then if $\Sigma, \f \vdash \p$, it follows that  $\Sigma  \vdash  \f \rightarrow \p$.} To see this, notice that 
$\Sigma, \f \vdash \p$ implies that $\Sigma, \f \vDash \p$ by soundness, whence $\Sigma \vDash \f \rightarrow  \p$. So by Theorem \ref{com} we must have  that $\Sigma  \vdash  \f \rightarrow \p$ as desired.}\footnote{This result can also be obtained directly by the familiar syntactic argument in most textbooks (e.g. \cite{Bell3, Mendelson}). In this case, one proceeds by transfinite induction and the only interesting case is  if $\p = \chi \rightarrow \All{V}\sigma$ has been obtained by an application of R3 from  $\chi \rightarrow \sigma^m_0, \chi \rightarrow \sigma^m_1, \chi \rightarrow \sigma^m_2, \dots$ By inductive hypothesis, we have deductions of $\f \rightarrow (\chi \rightarrow \sigma^m_0), \f \rightarrow (\chi \rightarrow \sigma^m_1), \f \rightarrow (\chi \rightarrow \sigma^m_2), \dots$ By propositional logic, we can obtain deductions of  $\f \wedge \chi \rightarrow \sigma^m_0, \f \wedge \chi \rightarrow \sigma^m_1, \f \wedge \chi \rightarrow \sigma^m_2, \dots$ Thus, applying R3, we get $\f \wedge \chi \rightarrow \All{V}\sigma$, and by propositional logic once more, we have $\f \rightarrow \p$.}
\end{Rmk}}

\begin{Rmk}
\emph{Observe that the present method also works, \emph{mutatis mutandis},  when  we drop identity from the language of $\mathcal{L}^{2}_{\Theta}$. In some of these logics (such as in weak second order logic) this might have no effect because identity is definable. However, in a language without identity, elementarily definable with parameters relation logic, for example, is  not  as expressive as its counterpart with identity.\footnote{The techniques of \cite{Casa} which were used in \cite{Badia} already to study extensions of first-order logic without identity, could be adapted to show this. For example, one could show that \emph{reduced structures} (i.e. the quotient structures by the Leibniz congruence identifying all elements satisfying the  same identity-free formulas with parameters from a given model)  are axiomatized by the sentence $(\All{V} (V(x) \leftrightarrow V(y)) \leftrightarrow x=y)$ in the language adding identity and quantifying over elementarily definable (without identity) relations with parameters . However, in the language without identity, a model and its reduced counterpart will satisfy the same sentences of the restricted second-order logic just mentioned. }}
\end{Rmk}

\begin{Rmk}
\emph{In the present completeness argument one cannot, in general, liberalize the restriction on the countability of $\Sigma$. This is because, keeping in  mind that in weak second-order logic the standard model of arithmetic $(\omega, +, \cdot, S,  0)$ is axiomatizable by a sentence $\f$  (\red{which, as we mentioned before, is  the conjunction of the  axioms of Robinson arithmetic  and the statement that every element has only finitely many predecessors which can be written as $\All{x}\Exi{V}\All{y}(y<x \rightarrow V(y))$}), we can then build the following theory $\Delta$ in a vocabulary obtained by adding to that of $\f$ an uncountable number of new constants $\{c_\alpha\mid \alpha \in \omega_1\}$:
$$ \{\f\} \cup \{c_\alpha \neq c_\beta\mid \alpha, \beta\in \omega_1, \alpha \neq \beta\}.$$
Clearly, this theory has no model but every countable subset of it does have a model (namely an expansion of $(\omega, +, \cdot, S,  0)$). So, if we would have this more general form of strong completeness, since $\Delta \vDash \bot$, it would follow that $\Delta \vdash \bot$ and this, by our definition of a deduction, would have to be witnessed by some countable  $\Delta' \subset \Delta$, which is impossible since all of them have models. Similarly, if our vocabulary is finite but contains that of arithmetic, using Lindstr\"om's result on the implicit definability of $(\omega, +, \cdot, S,  0)$  by a sentence \cite{lin}, one could reproduce the previous argument, \emph{mutatis mutandis}, for elementarily definable relation logic as well.
}
\end{Rmk}

\section{Conclusion}

In this note we have formulated a general framework that yields completeness immediately for numerous systems of restricted second-order logic. One natural question would be to what extent the present algebraic argument is tied to a logic evaluated on the two-element Boolean algebra. For example, it is not a difficult exercise to show that what we have done here generalizes to the recent context of so called Boolean-valued second-order  logic \cite{DV}, where formulas are evaluated on an arbitrary complete Boolean algebra. Can the same be done for logics with other algebraic semantics (such as those studied in \cite{Cin})? In this regard, one may attempt to use Rauszer and Sabalski's generalization of the Rasiowa-Sikorski lemma  to distributive lattices  \cite{RSb} but the biggest obstacle to solve would be how to generalize   Lemma \ref{use1}  which makes distinctive use of properties of Boolean logic.

\section*{Acknowledgements} \red{We are very grateful to two anonymous referees for the present journal who have been extraordinarily thorough, almost above and beyond the call of duty. Their comments contributed significantly in improving our presentation of the material.}
 Badia is supported by  the Australian Research Council grant DE220100544.

\end{document}